\documentclass[a4paper,12pt]{article}
\usepackage{comment}
\usepackage{cite}
\usepackage{amsmath}
\usepackage{amssymb}
\usepackage{amsfonts}
\usepackage[T1]{fontenc}
\usepackage[utf8]{inputenc}
\usepackage{graphicx}
\usepackage{fancyhdr}
\usepackage{float}
\usepackage{xcolor}
\usepackage{authblk}
\usepackage{mathrsfs}
\usepackage{empheq}
\usepackage[hyphens]{url}
\usepackage{hyperref}
\usepackage[]{amsthm}
\usepackage{tikz}
\usetikzlibrary{arrows.meta, decorations.markings,positioning}
\usepackage{pgfplots}
\pgfplotsset{compat=1.18} 

\usepackage{geometry}
\theoremstyle{plain}
\newtheorem{theorem}{Theorem}[section]

\newtheorem{proposition}[theorem]{Proposition}

\theoremstyle{definition}

\theoremstyle{remark}

\begin{document}

\title{A Variational link between the Olech-Opial inequality,
the Wirtinger inequality, and Emden-Fowler equations}

\author[$\dagger$]{Jean-Christophe {\sc Pain}$^{1,2,}$\footnote{jean-christophe.pain@cea.fr}\\
\small
$^1$CEA, DAM, DIF, F-91297 Arpajon, France\\
$^2$Universit\'e Paris-Saclay, CEA, Laboratoire Mati\`ere en Conditions Extr\^emes,\\ 
F-91680 Bruy\`eres-le-Ch\^atel, France
}

\date{}

\maketitle

\begin{abstract}

We establish a structural connection between the classical
Olech-Opial inequality and the Wirtinger inequality.
Using an integral identity involving the mixed energy term
$uu'$, we derive a nonlinear interpolation inequality
linking these two results.
The optimal constant is characterized by a variational problem
whose extremals satisfy an Emden-Fowler equation.
An explicit expression of the optimal constant is obtained
in terms of the Beta function.
This approach provides a natural bridge between mixed-energy
integral inequalities, classical spectral estimates,
and nonlinear boundary value problems.

\end{abstract}

\bigskip

\noindent
\textbf{Keywords:}
Olech-Opial inequality,
Wirtinger inequality,
Emden-Fowler equation,
integral inequalities,
variational methods,
Beta function.

\medskip

\noindent
\textbf{Mathematics Subject Classification (2020):}
26D10,
34B15,
35J20.

\section{Introduction}

Integral inequalities play a fundamental role in the study of
differential equations and variational problems.
Among the classical results in this direction,
the Wirtinger inequality (see, e.g., Refs. \cite{Hardy1952}) provides a sharp estimate relating
the $L^2$ norm of a function to the $L^2$ norm of its derivative
under Dirichlet boundary conditions.
On the other hand, the Olech-Opial inequality appears naturally
in the qualitative theory of nonlinear differential equations
and provides estimates involving the mixed energy term
$\int uu'$.
Although these inequalities originate from different contexts,
they share a common analytical structure.
The goal of this paper is to clarify this relationship
and to show how it leads naturally to a nonlinear interpolation
inequality connected with Emden-Fowler equations.
More precisely, we show that the identity
\[
(u^2)' = 2uu'
\]
provides a natural bridge between mixed-energy estimates and
quadratic energy inequalities.
This observation allows us to derive a nonlinear inequality
whose optimal constant can be obtained through a variational
problem.
The corresponding Euler-Lagrange equation coincides with
the classical Emden-Fowler equation, which allows us to
compute the optimal constant explicitly in terms of the
Beta function.
The paper is organized as follows.
In Section~2 we establish a structural link between the
Olech-Opial inequality (see Refs. \cite{Opial1960,Olech1963}) and the classical Wirtinger inequality.
Using the identity $(u^2)'=2uu'$, we show that the mixed-energy
estimate provided by the Olech-Opial inequality leads naturally
to a Wirtinger-type bound.
Section~3 illustrates how this relationship can be used in the
analysis of nonlinear boundary value problems of Emden-Fowler
type and provides a priori estimates for the energy of solutions.
In Section~4 we introduce a nonlinear interpolation inequality
connecting the previous results and formulate it as a variational
problem whose extremals satisfy an Emden-Fowler equation.
Finally, Section~5 provides an explicit computation of the optimal
constant using the Beta function, which yields a closed-form
expression depending on the exponent $p$. The case of the alternative mean-zero Wirtinger inequality is discussed in Section~6.

\section{A structural link between the Olech-Opial and Wirtinger inequalities}

In this section we establish a precise relationship between the classical
Olech-Opial inequality and the Wirtinger inequality.
Although these inequalities arise in different contexts
(nonlinear differential equations for Olech-Opial and spectral theory
for Wirtinger), we show that they are closely related through an
integral identity involving the mixed energy term $uu'$.

\subsection{The Wirtinger inequality}

Let $u\in H_0^1(0,L)$, i.e.
\[
u(0)=u(L)=0.
\]
The classical Wirtinger inequality, which can be found, for instance, in
Ref. \cite{Hardy1952}, states
\begin{equation}\label{un}
\int_0^L u(x)^2\,dx
\le
\frac{L^2}{\pi^2}
\int_0^L (u'(x))^2\,dx.
\end{equation}
The constant $\frac{L^2}{\pi^2}$ is optimal and equality holds for
\[
u(x)=\sin\!\left(\frac{\pi x}{L}\right).
\]
This inequality is a direct consequence of the spectral
decomposition of the Dirichlet Laplacian.

\subsection{The Olech-Opial inequality}

Let $u\in H^1(0,L)$ with
\[
u(0)=0.
\]
A classical form of the Olech-Opial inequality (see
Refs. \cite{Opial1960,Olech1963,Beesack1962}) reads
\begin{equation}\label{two}
\int_0^L |u(x)u'(x)|\,dx
\le
\frac{L}{2}
\int_0^L (u'(x))^2\,dx.
\end{equation}
Various proofs and refinements of this inequality can be found in
Refs. \cite{Levinson1964,Mallows1965,Calvert1967}.

\begin{proposition}
The constant $\frac{L}{2}$ in Eq.~(\ref{two}) is optimal.
\end{proposition}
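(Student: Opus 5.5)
Optimality means two things: \eqref{two} holds with the constant $\frac{L}{2}$, and no smaller constant works for all admissible $u$. Since \eqref{two} is quoted from the literature, the main task is to exhibit a function for which equality holds, or to which the ratio
\[
R(u)=\frac{\int_0^L |uu'|\,dx}{\int_0^L (u')^2\,dx}
\]
gets arbitrarily close to $\frac{L}{2}$. I would first recall the short proof of the inequality, because the cases of equality come from it. Set $v(x)=\int_0^x |u'(t)|\,dt$. Then $|u|\le v$ and $v'=|u'|$, so
\[
\int_0^L |uu'|\,dx \le \int_0^L v v'\,dx = \frac{v(L)^2}{2} \le \frac{L}{2}\int_0^L (u')^2\,dx .
\]
The last step is Cauchy--Schwarz applied to $v(L)=\int_0^L |u'|$.

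The equality cases are forced by the three inequalities in this chain. First, $|u|=v$ requires $u'$ to have constant sign. Second, equality in Cauchy--Schwarz requires $|u'|$ to be constant. Together these suggest the linear function $u(x)=x$, which satisfies $u(0)=0$ and lies in $H^1(0,L)$.

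For this choice the computation is immediate:
\[
\int_0^L |uu'|\,dx=\int_0^L x\,dx=\frac{L^2}{2},\qquad \int_0^L (u')^2\,dx=L .
\]
Hence $R(u)=\frac{L}{2}$ exactly. If \eqref{two} held with some constant $C<\frac{L}{2}$, then $u(x)=x$ would give $\frac{L^2}{2}\le CL<\frac{L^2}{2}$, a contradiction. So $\frac{L}{2}$ is optimal and is attained, for example by $u(x)=cx$ with any $c\neq 0$.

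I do not expect a real obstacle here. The only point needing care is the admissible class. The proposition uses the one-sided condition $u(0)=0$, so the extremal $u(x)=x$ is allowed and no approximating sequence is needed. If both endpoints were required to vanish, as in the $H_0^1$ setting of the Wirtinger inequality, the constant would change (to $\frac{L}{4}$ by symmetry). I would add a remark saying this, to avoid confusion with \eqref{un}.
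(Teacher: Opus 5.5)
Your proof is correct and follows the paper's route for the inequality: the same chain $|u|\le F=\int_0^x|u'|$, then $\int_0^L F'F\,dx=\frac{1}{2}F(L)^2$, then Cauchy--Schwarz. Your optimality step is more precise than the paper's, which only says the constant ``can be approached by sequences of functions converging to linear profiles''; you check that $u(x)=x$ is admissible under the one-sided condition $u(0)=0$ and gives equality exactly, so the constant is attained rather than merely approached.
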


\begin{proof}
Since $u(0)=0$, using the representation
\[
u(x)=\int_0^x u'(t)\,dt,
\]
we obtain
\[
|u(x)|
\le
\int_0^x |u'(t)|dt,
\]
and thus we have
\[
|u(x)u'(x)|
\le
|u'(x)|\int_0^x |u'(t)|dt.
\]
Integrating over $(0,L)$ gives
\[
\int_0^L |u(x)u'(x)|dx
\le
\int_0^L |u'(x)|\left(\int_0^x |u'(t)|dt\right)dx.
\]
Setting
\[
F(x) = \int_0^x |u'(t)|\,dt,
\]
the right-hand side becomes
\[
\int_0^L |u'(x)|\,F(x)\,dx.
\]
Observing that
\[
F'(x) = |u'(x)|,
\]
we get
\[
\int_0^L |u'(x)|\,F(x)\,dx
=
\int_0^L F'(x)F(x)\,dx.
\]
This integral can be evaluated explicitly:
\[
\int_0^L F'(x)F(x)\,dx
=
\frac{1}{2}F(L)^2,
\]
giving
\[
\int_0^L |u(x)u'(x)|\,dx
\le
\frac{1}{2}
\left(\int_0^L |u'(t)|\,dt\right)^2.
\]
We now use
\[
\left(\int_0^L |u'(t)|\,dt\right)^2
\le
L \int_0^L (u'(t))^2\,dt,
\]
which is a direct consequence of the Cauchy-Schwarz inequality,
\[
\|f\|_{L^1}^2 \le L \|f\|_{L^2}^2.
\]
Combining the previous estimates, we obtain
\[
\int_0^L |u(x)u'(x)|\,dx
\le
\frac{L}{2}\int_0^L (u'(x))^2 dx.
\]
The constant is optimal and can be approached by sequences
of functions converging to linear profiles.

\end{proof}

\subsection{An integral identity}

The connection between the two inequalities follows from
the elementary identity
\begin{equation}\label{three}
(u^2)' = 2uu'.
\end{equation}
Integrating from $0$ to $x$ gives
\begin{equation}\label{four}
u(x)^2
=
2\int_0^x u(t)u'(t)\,dt.
\end{equation}
This identity shows that the square of the function
can be reconstructed from the mixed energy term.

\subsection{Derivation of a weak Wirtinger inequality}

Using Eq.~(\ref{four}) and the triangle inequality, we find
\[
|u(x)|^2
\le
2\int_0^x |u(t)u'(t)|dt.
\]
Integrating again over $(0,L)$ gives
\[
\int_0^L u(x)^2 dx
\le
2\int_0^L \int_0^x |u(t)u'(t)|dt\,dx.
\]
Changing the order of integration, we get
\[
\int_0^L u(x)^2 dx
\le
2\int_0^L (L-t)|u(t)u'(t)|dt.
\]
Applying the Olech-Opial inequality (see Eq.~(\ref{two})) yields
\[
\int_0^L u(x)^2 dx
\le
L
\int_0^L |u(t)u'(t)|dt
\le
\frac{L^2}{2}
\int_0^L (u'(t))^2 dt,
\]
and we obtain the estimate
\begin{equation}\label{cinq}
\int_0^L u(x)^2 dx
\le
\frac{L^2}{2}
\int_0^L (u'(x))^2 dx.
\end{equation}

\subsection{Comparison with the optimal constant}

The optimal Wirtinger inequality gives
\[
\frac{L^2}{\pi^2}
\]
instead of $\frac{L^2}{2}$.
Since
\[
\frac{L^2}{2}
>
\frac{L^2}{\pi^2},
\]
the estimate of Eq.~(\ref{cinq}) is weaker but remains of the same structural
type.

\subsection{Interpretation}

The previous derivation shows that the Olech-Opial inequality
controls the mixed energy term
\[
\int uu'
\]
while the Wirtinger inequality controls
\[
\int u^2.
\]
The integral identity of Eq.~(\ref{three}) allows one to pass from the mixed
energy to the quadratic energy.
Thus the Olech-Opial inequality can be viewed as a
precursor inequality that leads naturally to a
Wirtinger-type estimate.
This relationship highlights a structural bridge between
inequalities arising in nonlinear differential equations
and classical spectral inequalities.

\section{Application to Emden-Fowler type equations}

We now illustrate how the connection between the Olech-Opial
and Wirtinger inequalities can be used in the analysis of
nonlinear differential equations of Emden-Fowler type.

\subsection{The Emden-Fowler equation}

Consider the boundary value problem
\begin{equation}\label{six}
-u''(x) = \lambda |u(x)|^{p-1}u(x), \qquad x\in (0,L),
\end{equation}
with Dirichlet boundary conditions
\[
u(0)=u(L)=0 ,
\]
where $\lambda>0$ and $p>1$.
Equations of this form, referred to as Emden-Fowler equations, arise in astrophysics,
thermal self-gravitating models and nonlinear diffusion. They are classical in nonlinear analysis (see Ref. \cite{Rabinowitz1986}).

\subsection{Energy identity}

Multiplying the equation by $u(x)$ and integrating over $(0,L)$,
we obtain
\[
-\int_0^L u''(x)u(x)\,dx
=
\lambda \int_0^L |u(x)|^{p+1}dx.
\]
Integrating by parts and using the boundary conditions yields
\begin{equation}\label{sept}
\int_0^L (u'(x))^2 dx
=
\lambda \int_0^L |u(x)|^{p+1}dx.
\end{equation}
Thus the nonlinear energy is directly controlled
by the gradient energy.

\subsection{Control of the nonlinear term}

We now estimate the nonlinear term in terms of the gradient energy. First, we use the elementary interpolation inequality
\[
\int_0^L |u(x)|^{p+1} dx
\le
\|u\|_{L^2(0,L)}^{\,p-1}
\int_0^L u(x)^2 dx.
\]
Next, applying the Wirtinger inequality (see Eq.~(\ref{un})), we obtain
\[
\|u\|_{L^2(0,L)}^2
\le
\frac{L^2}{\pi^2}
\int_0^L (u'(x))^2 dx.
\]
Combining the previous estimates yields
\[
\int_0^L |u(x)|^{p+1} dx
\le
\left(\frac{L^2}{\pi^2}\right)^{\frac{p+1}{2}}
\left(\int_0^L (u'(x))^2 dx\right)^{\frac{p+1}{2}}.
\]
This estimate can be viewed as a one-dimensional Sobolev-type inequality,
derived here without invoking the general Sobolev embedding theorem. Therefore,
\[
\int_0^L |u(x)|^{p+1} dx
\le
\left(\frac{L^2}{\pi^2}\right)^{\frac{p+1}{2}}
\left(\int_0^L (u'(x))^2 dx\right)^{\frac{p+1}{2}}.
\]
The constant $C$ depends only on $p$ and $L$. Substituting into the energy identity of Eq.~(\ref{sept}) yields
\[
\int_0^L (u'(x))^2 dx
\le
\lambda
\left(
\frac{L^2}{\pi^2}
\int_0^L (u'(x))^2 dx
\right)^{\frac{p+1}{2}}.
\]
This derivation shows that the nonlinear estimate follows
directly from the quadratic energy inequality, without
invoking general Sobolev embeddings.

\subsection{A priori estimate: a lower bound on the energy}

Let us set
\[
E=\int_0^L (u'(x))^2 dx.
\]
Then we have
\[
E
\le
\lambda
\left(
\frac{L^2}{\pi^2}
\right)^{\frac{p+1}{2}}
E^{\frac{p+1}{2}}.
\]
Assuming $E>0$, we obtain
\begin{equation}
E^{\frac{p-1}{2}}
\ge
\frac{1}{\lambda}
\left(\frac{\pi^2}{L^2}\right)^{\frac{p+1}{2}}.
\end{equation}
This provides a lower bound on the energy of any
nontrivial solution.

\subsection{Role of the Olech-Opial inequality and interpretation}

The mixed-energy estimate
\[
u(x)^2
=
2\int_0^x u(t)u'(t)dt
\]
combined with the Olech-Opial inequality yields
\[
\int_0^L u(x)^2 dx
\le
\frac{L^2}{2}
\int_0^L (u'(x))^2 dx.
\]
Thus the nonlinear term can also be controlled
using only the mixed energy $\int uu'$.
This shows that the Olech-Opial inequality provides
an alternative route to derive energy estimates
for nonlinear boundary value problems. The analysis above highlights the following hierarchy:
\[
\text{Olech-Opial} \quad \Rightarrow \quad
\text{mixed energy control}
\]
\[
\Downarrow
\]
\[
\text{Wirtinger-type bounds}
\]
\[
\Downarrow
\]
\[
\text{a priori estimates for nonlinear equations}.
\]
Hence the Olech-Opial inequality can be interpreted
as a structural tool that bridges mixed-energy
estimates and spectral inequalities in the study
of nonlinear differential equations such as
the Emden-Fowler equation.

\section{A variational inequality linking Olech-Opial, Wirtinger and Emden-Fowler equations}

In this section we establish a nonlinear interpolation inequality
connecting the classical Olech-Opial inequality and the Wirtinger
inequality. The optimal constant is obtained through a variational
problem and leads naturally to an energy estimate for Emden-Fowler
equations.

\subsection{An interpolation inequality}

Let $u\in H_0^1(0,L)$.
For $p>1$, define

\[
I_p(u)=\int_0^L |u(x)|^{p+1}dx.
\]

\begin{theorem}
For every $p>1$ there exists a constant $C_p(L)>0$ such that

\begin{equation}
\int_0^L |u(x)|^{p+1}dx
\le
C_p(L)
\left(\int_0^L (u'(x))^2dx\right)^{\frac{p+1}{2}}
\end{equation}
for all $u\in H_0^1(0,L)$.
The optimal constant is
\[
C_p(L)=
\sup_{u\in H_0^1(0,L)\setminus\{0\}}
\frac{\int_0^L |u(x)|^{p+1}dx} 
{\left(\int_0^L (u'(x))^2 dx\right)^{\frac{p+1}{2}}}.
\]
\end{theorem}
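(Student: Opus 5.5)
The plan is to prove finiteness of the supremum through a pointwise $L^\infty$ bound, combined with the Wirtinger inequality (\ref{un}). Optimality of the stated constant then follows from homogeneity. I deliberately avoid the interpolation step of Section~3 with $\|u\|_{L^2}^{p-1}$: it is not scale-consistent, so $\|u\|_{L^2}^{p-1}$ should be replaced by $\|u\|_{L^\infty}^{p-1}$.

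\emph{Step 1 (pointwise bound).} Let $u\in H_0^1(0,L)$ and set $E=\int_0^L (u')^2\,dx$. Since $u(0)=0$, Cauchy--Schwarz gives $u(x)^2\le x\int_0^x (u')^2$. Since $u(L)=0$, we also have $u(x)=-\int_x^L u'$, so $u(x)^2\le (L-x)\int_x^L (u')^2$. Write $A=\int_0^x (u')^2$ and $B=\int_x^L (u')^2$, with $A+B=E$. Then $u(x)^2\le \min(xA,(L-x)B)$. Taking the convex combination with weights $(L-x)/L$ and $x/L$ yields
\[
u(x)^2\le \frac{x(L-x)}{L}\,E\le \frac{L}{4}\,E .
\]
Hence $\|u\|_{L^\infty(0,L)}^2\le \frac{L}{4}E$. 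The cruder bound $\|u\|_\infty^2\le L E$ would already suffice, so the sharp weights are not essential.

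\emph{Step 2 (interpolation).} Since $p>1$, we have $|u|^{p+1}\le \|u\|_\infty^{p-1}u^2$. Integrating and applying (\ref{un}) gives
\[
\int_0^L |u|^{p+1}\,dx\le \Big(\frac{L}{4}E\Big)^{\frac{p-1}{2}}\frac{L^2}{\pi^2}E
=\Big(\frac{L}{4}\Big)^{\frac{p-1}{2}}\frac{L^2}{\pi^2}\,E^{\frac{p+1}{2}} .
\]
So the inequality holds with the explicit finite constant $\big(\frac{L}{4}\big)^{\frac{p-1}{2}}\frac{L^2}{\pi^2}$. The weaker estimate (\ref{cinq}), derived from the Olech--Opial inequality, could replace (\ref{un}) here at the price of a larger constant.

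\emph{Step 3 (the optimal constant).} The quotient
\[
Q(u)=\frac{\int_0^L |u|^{p+1}\,dx}{\big(\int_0^L (u')^2\,dx\big)^{\frac{p+1}{2}}}
\]
is well defined for $u\neq 0$, because the Dirichlet condition forces $E>0$ whenever $u\not\equiv 0$. By Step~2, $Q$ is bounded above, so the supremum $C_p(L)$ is finite. It is strictly positive, as testing with $u=\sin(\pi x/L)$ shows. By definition, the inequality holds with $C_p(L)$ for every nonzero $u$, and trivially for $u=0$. No smaller constant works, since any admissible constant $C$ must satisfy $Q(u)\le C$ for all $u$.

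Scaling confirms consistency. $Q$ is invariant under $u\mapsto tu$, because numerator and denominator are both homogeneous of degree $p+1$. Under the dilation $u(x)\mapsto u(Lx)$ it transforms so that $C_p(L)=L^{\frac{p+3}{2}}C_p(1)$. This matches Step~2, since $\frac{p-1}{2}+2=\frac{p+3}{2}$.

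The only genuinely nontrivial point is the $L^\infty$ control in Step~1. The $L^2$-based interpolation used earlier is dimensionally inconsistent and must not be relied upon. Once the $L^\infty$ bound is in place, the rest is bookkeeping. Attainment of the supremum is not needed for this statement. If one wanted it, the natural route is the compact embedding $H_0^1\hookrightarrow C[0,L]$ applied to a maximizing sequence normalized by $E=1$. The maximizer would then satisfy the Emden--Fowler equation (\ref{six}) via the Euler--Lagrange equation.
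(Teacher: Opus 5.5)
Your proof is correct, and it departs from the paper at exactly the step where the paper goes wrong. The paper gives no argument right after the theorem. Finiteness of the constant is implicitly taken from Section~3.3, where the ``elementary interpolation inequality'' $\int_0^L|u|^{p+1}\,dx\le\|u\|_{L^2}^{p-1}\int_0^L u^2\,dx$ is combined with (\ref{un}) to give the constant $(L^2/\pi^2)^{(p+1)/2}$. The optimality clause is simply the definition of the supremum.

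You instead replace $\|u\|_{L^2}^{p-1}$ by $\|u\|_{L^\infty}^{p-1}$. You control the sup norm by the two-sided Cauchy--Schwarz bound $u(x)^2\le\frac{x(L-x)}{L}\int_0^L(u')^2\,dx$, which uses both boundary conditions. Only the leftover $\int_0^L u^2$ is then handled, by (\ref{un}) or by the Olech--Opial-based (\ref{cinq}).

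What this buys is validity. The paper's step amounts to $\|u\|_{L^{p+1}}\le\|u\|_{L^2}$, which is false for $p>1$. A tall narrow bump violates it. Already for $u=\sin(\pi x/L)$, the left-hand side of the theorem equals $L\int_0^1|\sin\pi t|^{p+1}\,dt$, while the paper's right-hand side evaluates to $(L/2)^{(p+1)/2}$, which is smaller for small $L$. Correspondingly, the paper's constant scales like $L^{p+1}$ rather than the correct $L^{(p+3)/2}$ that your dilation argument identifies. Your route gives the correct explicit bound $C_p(L)\le (L/4)^{(p-1)/2}L^2/\pi^2$ with the right scaling, at the cost of not being sharp.

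One minor wording point: in your plan you say optimality follows from homogeneity. It actually follows from the definition of the supremum, as your Step~3 says; homogeneity is only a consistency check.
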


\subsection{Application to Emden-Fowler equations}

Consider the boundary value problem
\[
-u''=\lambda |u|^{p-1}u,
\qquad
u(0)=u(L)=0.
\]
Multiplying by $u$ and integrating yields
\[
\int_0^L (u'(x))^2 dx
=
\lambda
\int_0^L |u(x)|^{p+1} dx.
\]
Applying the interpolation inequality with constant $C_p(L)$ gives
\[
\int_0^L (u'(x))^2 dx
\le
\lambda C_p(L)
\left(\int_0^L (u'(x))^2 dx\right)^{\frac{p+1}{2}}.
\]
Hence any nontrivial solution must satisfy
\[
\left(\int_0^L (u'(x))^2 dx\right)^{\frac{p-1}{2}}
\ge
\frac{1}{\lambda C_p(L)}.
\]
This provides an explicit lower bound on the energy of
solutions.

\subsection{Variational characterization and Euler-Lagrange equation}

Define the functional
\[
J(u) = \frac{\displaystyle \int_0^L |u(x)|^{p+1} dx}{\left(\displaystyle \int_0^L (u'(x))^2 dx\right)^{(p+1)/2}}.
\]
A maximizer $u_*$ must satisfy the stationarity condition:
\[
\frac{d}{d\varepsilon} J(u_* + \varepsilon v)\Big|_{\varepsilon=0} = 0 \quad \text{for all } v\in H_0^1(0,L).
\]
Set
\[
F(u) = \int_0^L |u(x)|^{p+1} dx, \quad E(u) = \int_0^L (u'(x))^2 dx, 
\quad J(u) = \frac{F(u)}{E(u)^{(p+1)/2}}.
\]
Calculating the first variation gives
\[
\delta J(u)[v] = \frac{\delta F(u)[v] \, E(u)^{(p+1)/2} - F(u) \frac{p+1}{2} E(u)^{(p-1)/2} \delta E(u)[v]}{E(u)^{p+1}} = 0,
\]
which implies
\begin{equation}\label{var-cond}
\delta F(u)[v] = \frac{p+1}{2} \frac{F(u)}{E(u)} \delta E(u)[v].
\end{equation}
The variations of $F$ and $E$ are
\[
\delta F(u)[v] = (p+1)\int_0^L u^p(x) v(x) \, dx, 
\qquad
\delta E(u)[v] = 2 \int_0^L u'(x) v'(x) \, dx.
\]
Substituting into Eq.~\eqref{var-cond} gives
\[
\int_0^L u^p(x) v(x) \, dx = \mu \int_0^L u'(x) v'(x) \, dx, 
\quad \text{with} \quad \mu = \frac{F(u)}{2 E(u)} (p+1).
\]
Integrating by parts on the right-hand side (and using $v(0)=v(L)=0$) leads to the Euler-Lagrange equation
\begin{equation}\label{EL}
-\mu u'' = u^p, \quad u(0)=u(L)=0.
\end{equation}
This is exactly an Emden-Fowler equation, with $\mu$ as a Lagrange multiplier depending on the extremal.

\subsection{Optimal constant}

Let $u_*$ be a maximizer of $J$. Then
\[
C_p(L)=
\frac{\int_0^L |u_*(x)|^{p+1}dx} 
{\left(\int_0^L (u_*'(x))^2 dx\right)^{\frac{p+1}{2}}}.
\]
Using the energy identity
\[
\int_0^L (u_*'(x))^2 dx
=
\mu
\int_0^L |u_*(x)|^{p+1} dx,
\]
we obtain
\[
C_p(L)
=
\mu^{-\frac{p+1}{2}}.
\]
Thus the optimal constant is determined by the first
Dirichlet solution of the Emden-Fowler equation. The Olech-Opial inequality provides a weaker
but structurally similar bound which leads to the
nonlinear interpolation inequality.

\subsection{Relation with classical inequalities}

For $p=1$, the inequality becomes
\[
\int_0^L u(x)^2 dx
\le
C_1(L)\int_0^L (u'(x))^2 dx,
\]
which is precisely the Wirtinger inequality with
\[
C_1(L)=\frac{L^2}{\pi^2}.
\]

\section{Explicit optimal constant via the Beta function}

Let $u \in H_0^1(0,L)$ and $p>1$.  
The best constant $C_p(L)$ in
\[
\int_0^L |u(x)|^{p+1} dx \le C_p(L) \left( \int_0^L (u'(x))^2 dx \right)^{(p+1)/2}
\]
is attained by a positive extremal $u_*$ solving the Emden-Fowler equation
\[
-u_*'' = \mu u_*^p, \quad u_*(0)=u_*(L)=0,
\]
where $\mu>0$ is a Lagrange multiplier associated with the variational problem. In this section we provide an analytical expression of the constant $C_p(L)$. Multiplying the ordinary differential equation by $u_*'$ and integrate once gives
\[
-u_*'' u_*' = \mu u_*^p u_*' \implies \frac12 (u_*')^2 = \frac{\mu}{p+1} \left(A^{p+1} - u_*^{p+1}\right),
\]
where $A = \max u_*$. This is the classical first integral of the Emden-Fowler equation. Rewriting, we have
\[
dx = \frac{du_*}{u_*'} = \sqrt{\frac{p+1}{2\mu}} \frac{du_*}{\sqrt{A^{p+1}-u_*^{p+1}}}.
\]
By symmetry, the half-interval corresponds to the increasing branch from $0$ to $A$:
\[
\frac{L}{2} = \sqrt{\frac{p+1}{2\mu}} \int_0^A \frac{du_*}{\sqrt{A^{p+1}-u_*^{p+1}}}.
\]
Setting $u_* = A t$ with $t \in [0,1]$, we have $du_* = A dt$ and $A^{p+1}-u_*^{p+1} = A^{p+1} (1-t^{p+1})$. Hence, we get
\[
\int_0^A \frac{du_*}{\sqrt{A^{p+1}-u_*^{p+1}}} = A^{-(p-1)/2} \int_0^1 \frac{dt}{\sqrt{1-t^{p+1}}},
\]
and thus
\[
\frac{L}{2} = \sqrt{\frac{p+1}{2\mu}}\, A^{-(p-1)/2} \int_0^1 \frac{dt}{\sqrt{1-t^{p+1}}}.
\]
Recall that the Beta function is defined as
\[
B(a,b) = \int_0^1 t^{a-1} (1-t)^{b-1} \, dt, \quad a,b>0.
\]
Using the standard Beta function identity
\[
\int_0^1 \frac{dt}{\sqrt{1-t^{p+1}}} = \frac{1}{p+1} B\Big( \frac{1}{p+1}, \frac12 \Big),
\]
we obtain
\[
\frac{L}{2} = \sqrt{\frac{p+1}{2\mu}}\, A^{-(p-1)/2} \frac{1}{p+1} B\Big( \frac{1}{p+1}, \frac12 \Big).
\]
Squaring both sides and isolating $\mu$ gives
\[
\mu = \frac{2}{(p+1) L^2} A^{p-1} \left[ B\Big(\frac{1}{p+1}, \frac12\Big) \right]^2.
\]
By symmetry, we get
\[
\int_0^L u_*(x)^{p+1} dx = 2 \int_0^{L/2} u_*(x)^{p+1} dx = 2 \int_0^A \frac{u_*^{p+1}}{u_*'} du_*.
\]
Substitute $u_*' = \sqrt{\frac{2\mu}{p+1} (A^{p+1}-u_*^{p+1})}$ and $u_* = A t$:
\[
\int_0^L u_*^{p+1}(x) dx = 2 \sqrt{\frac{p+1}{2\mu}}\, A^{(p+1)/2} \int_0^1 \frac{t^{p+1} dt}{\sqrt{1-t^{p+1}}}.
\]
The integral can again be expressed as a Beta function:
\[
\int_0^1 \frac{t^{p+1} dt}{\sqrt{1-t^{p+1}}} = \frac{1}{p+1} B\Big( \frac{p+2}{p+1}, \frac12 \Big).
\]
Finally, using
\[
C_p(L) = \frac{\displaystyle\int_0^L u_*^{p+1}(x) dx}{\displaystyle\left(\int_0^L (u_*'(x))^2 dx\right)^{(p+1)/2}} = \mu^{-(p+1)/2} \frac{\displaystyle\int_0^L u_*^{p+1}(x) dx}{\left(\displaystyle\int_0^L u_*^{p+1}(x) dx\right)^{(p+1)/2}} \, ,
\]
we obtain
\[
C_p(L) = \frac{ L^{p+1} }{ (p+1)^{(p+1)/2} } 
\frac{ \left[ B\Big( \frac{1}{p+1}, \frac12 \Big) \right]^p }{ \left[ B\Big( \frac{p+2}{p+1}, \frac12 \Big) \right]^{(p+1)/2} }.
\]

\paragraph{Consistency check.} When $p=1$, $B(1/2,1/2) = \pi$ and $B(3/2,1/2) = \pi/2$, giving
\[
C_1(L) = \frac{L^2}{\pi^2},
\]
which coincides with the optimal Wirtinger constant.

\section{Alternative mean-zero Wirtinger inequality}

In this section, we discuss a variant of the Wirtinger inequality
for functions with zero mean:
\[
\int_0^L u(x)\,dx = 0.
\]

\subsection{Mean-zero Wirtinger inequality}

Let $u \in H^1(0,L)$ with zero mean. Then the inequality reads
\[
\int_0^L u(x)^2\,dx \le \frac{L^2}{4\pi^2} \int_0^L (u'(x))^2\,dx,
\]
with equality for $u(x) = \sin(2\pi x/L)$ or $\cos(2\pi x/L)$.
Compared to the Dirichlet version, the constant is smaller by a factor of $4$.

\subsection{Connection with Olech-Opial}

Using the integral identity $(u^2)' = 2uu'$ as before, we can
derive a weaker estimate
\[
\int_0^L u(x)^2 dx \le \frac{L^2}{2} \int_0^L (u'(x))^2 dx,
\]
which is analogous to the previous derivation but not optimal
for mean-zero functions.

\subsection{Implications for nonlinear problems}

In the study of Emden-Fowler type equations, replacing
the standard Wirtinger constant by the mean-zero constant
$L^2/(4\pi^2)$ leads to sharper energy estimates:
\[
E^{(p-1)/2} \ge \frac{1}{\lambda} \left(\frac{4\pi^2}{L^2}\right)^{(p+1)/2},
\]
where $E = \int_0^L (u')^2 dx$. Thus, the mean-zero formulation provides an alternative
structural pathway linking mixed-energy estimates
(Olech-Opial) and spectral-type inequalities.

\section{Conclusion}

In this paper we have clarified a structural relationship between
the Olech-Opial inequality and the classical Wirtinger inequality.
The key observation is that the identity $(u^2)' = 2uu'$ allows one to pass from estimates involving the mixed energy
term $\int uu'$ to quadratic energy bounds involving $\int u^2$.
This perspective naturally leads to a nonlinear interpolation
inequality whose optimal constant is characterized by a
variational problem.
The corresponding Euler-Lagrange equation coincides with
the Emden-Fowler equation, which provides a direct link
between integral inequalities and nonlinear boundary value
problems.
Moreover, we obtained an explicit formula for the optimal
constant in terms of the Beta function.
This computation highlights the role played by the fundamental
solution of the Emden-Fowler equation as the extremal function
in the associated variational problem.
These results illustrate how mixed-energy inequalities,
spectral estimates and nonlinear differential equations
can be connected within a unified analytical framework.
Possible extensions include weighted versions of the inequality
or higher-dimensional analogues related to Sobolev-type
inequalities.

Interestingly, the extremal functions associated with this
inequality coincide with the fundamental solutions of the
Emden-Fowler equation, which play the role of nonlinear
analogues of the sine function appearing in the classical
Wirtinger inequality.

\end{document}